\newtheorem*{theorem}{Theorem}
\newtheorem*{corollary}{Corollary} 
\newtheorem{lemma}{Lemma}
\newtheorem*{proposition}{Proposition}
\def\N{\mathbb N}
\def\Z{\mathbb Z}
\def\<{\langle}
\def\>{\rangle}
\def\Area{{\rm Area}}
\def\e{\varepsilon}
\def\G{\Gamma}
\title{The Dehn functions of $Out(F_n)$ and $Aut(F_n)$}
\begin{document}

\begin{abstract}  For $n\ge 3$,
the Dehn functions of $Out(F_n)$ and $Aut(F_n)$ are exponential. 
Hatcher and Vogtmann proved that they are at most exponential, 
and the complementary lower bound in the case $n=3$ was established
by Bridson and Vogtmann. 
Handel and Mosher completed the proof
by reducing the lower bound for  $n>4$ to the case $n=3$. 
In this note we give a shorter, more direct proof  of this last reduction.

\end{abstract}

\author[Bridson]{Martin R.~Bridson}
\address{Martin R.~Bridson\\
Mathematical Institute \\
24-29 St Giles' \\
Oxford OX1 3LB \\
U.K. }
\email{bridson@maths.ox.ac.uk}

\author[Vogtmann]{Karen Vogtmann }
\address{Karen Vogtmann\\
Department of Mathematics\\
Cornell University\\
Ithaca NY 14853 }
\email{vogtmann@math.cornell.edu}

\thanks{Bridson is supported by an EPSRC Senior Fellowship.
Vogtmann is supported by NSF grant DMS-0204185.}

\subjclass{20F65, 20F28, 53C24, 57S25}

\keywords{Automorphism groups of free groups,
Dehn functions}

\maketitle

Dehn functions provide upper bounds on the complexity of the word problem in finitely presented groups. They are examples of filling functions: if a group $G$ acts properly and cocompactly on a simplicial complex $X$, then the Dehn function of $G$ is asymptotically equivalent to the function that provides the optimal upper bound on the area of least-area discs in $X$, where the bound is expressed as a function of the length of the boundary of the disc.  This article is concerned with the Dehn functions of automorphism groups of finitely-generated free groups.

Much of the contemporary study of $Out(F_n)$ and $Aut(F_n)$ is based on the deep analogy between these groups, mapping class groups, and lattices in semisimple Lie groups, particularly 
${\rm{SL}}(n,\Z)$. The Dehn functions of mapping class groups are quadratic \cite{mosher}, 
as is the Dehn function of 
${\rm{SL}}(n,\Z)$ if $n\ge 5$ (see \cite{young}). In contrast, Epstein {\em{et al.}}
\cite{Ep} proved that the Dehn function of ${\rm{SL}}(3,\Z)$ is exponential. Building on their result, we proved in \cite{BV} that $Aut(F_3)$ and $Out(F_3)$ also have exponential Dehn functions.  Hatcher and Vogtmann \cite{HV} established an exponential upper bound on the Dehn function of $Aut(F_n)$ and $Out(F_n)$ for all $n\ge 3$. The comparison with ${\rm{SL}}(n,\Z)$ might lead one to suspect that this last result is not optimal for 
large $n$, but recent work of Handel and Mosher \cite{HM} shows that in fact it is:  they
establish an exponential lower bound by using
their general results on quasi-retractions to reduce to the case $n=3$.

\begin{theorem}\label{t:main} For $n\ge 3$, the Dehn functions of $Aut(F_n)$ and $Out(F_n)$ are 
exponential.
\end{theorem}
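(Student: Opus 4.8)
The plan is to combine the two known halves---Hatcher--Vogtmann's exponential upper bound \cite{HV}, valid for all $n\ge 3$, and the exponential lower bound for $n=3$ from \cite{BV}---with a single reduction that propagates the $n=3$ lower bound upwards. Since the upper bound already gives $\delta_{Aut(F_n)}\preceq\exp$ and $\delta_{Out(F_n)}\preceq\exp$, it suffices to prove $\delta_{Aut(F_n)}\succeq\delta_{Aut(F_3)}$ and $\delta_{Out(F_n)}\succeq\delta_{Out(F_3)}$ for every $n\ge 4$; the right-hand sides are exponential by \cite{BV}, and this forces both Dehn functions to be exponential. The mechanism I would use is the standard principle that Dehn functions are inherited by retracts: if $H$ is a retract of a finitely presented group $G$, i.e.\ there is a homomorphism $\rho\colon G\to H$ restricting to the identity on $H\le G$, then $H$ is finitely presented and $\delta_H\preceq\delta_G$. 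The proof is to fill a null-homotopic word $w$ over $H$ inside $G$, with area at most $\delta_G(|w|)$, and then apply $\rho$ to the resulting van Kampen diagram: $\rho$ replaces each edge-label by a bounded word in $H$ and each $2$-cell by one of finitely many relators of $H$, so the area grows by at most a constant factor. The same conclusion should hold under the weaker, coarse hypothesis of a coarsely Lipschitz retraction onto an undistorted subgroup, which is the form I expect to need here.

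Next I would produce the subgroup and the retraction. Write $F_n=A*B$ with $A=\langle x_1,x_2,x_3\rangle\cong F_3$ and $B=\langle x_4,\dots,x_n\rangle\cong F_{n-3}$, and let $H\le Aut(F_n)$ be the copy of $Aut(F_3)$ consisting of automorphisms that act on $A$ and fix $B$ pointwise. Every automorphism preserving the normal closure $\langle\langle B\rangle\rangle$ descends to an automorphism of $F_n/\langle\langle B\rangle\rangle\cong F_3$, and this descent restricts to the identity on $H$; thus the quotient map gives a genuine homomorphic retraction from the stabilizer $\mathrm{Stab}(\langle\langle B\rangle\rangle)$ onto $Aut(F_3)$. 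The analogous construction with $Out$, using the stabilizer of the conjugacy class of the free factor $A$ and the restriction map to $Out(A)$, produces a retraction onto $Out(F_3)$. So in both cases $Aut(F_3)$ (resp.\ $Out(F_3)$) is a retract of a natural stabilizer subgroup, and hence has Dehn function dominated by that of the stabilizer.

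The hard part will be passing from the stabilizer subgroup to all of $Aut(F_n)$ (resp.\ $Out(F_n)$). A free-factor stabilizer is not a retract of the whole group via any obvious homomorphism, so I would not try to build a global homomorphic retraction; instead I would aim to show that the standard $Aut(F_3)$ is undistorted in $Aut(F_n)$ and to promote the partial, algebraically-defined retraction above to a genuinely coarse Lipschitz retraction $Aut(F_n)\to H$ defined on every automorphism, not merely those preserving $\langle\langle B\rangle\rangle$. Concretely, I would compose an arbitrary $\phi$ with the fixed quotient $F_n\to F_3$ and a fixed section, and then argue---using normal forms and the geometry of the relevant deformation space---that the resulting coarse assignment is Lipschitz and moves each point of $H$ a bounded amount. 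Obtaining the distance estimates without assuming that $\phi$ stabilizes $B$ is the crux: this is exactly the input that Handel--Mosher extract from their general quasi-retraction theory \cite{HM}, and the aim of the note is to supply it by a short, direct geometric argument. Once the coarse Lipschitz retraction and undistortedness are in hand, the transfer principle yields $\delta_{Aut(F_n)}\succeq\delta_{Aut(F_3)}\simeq\exp$, and together with the upper bound of \cite{HV} this completes the proof; the $Out$ case is identical.
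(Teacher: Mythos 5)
Your overall frame (combine the \cite{HV} upper bound with the \cite{BV} lower bound for $n=3$, and reduce $n\ge 4$ to $n=3$) is the right one, but the reduction itself --- the only part that actually needs proving --- is left as an aspiration rather than carried out. You correctly observe that there is no homomorphic retraction $Aut(F_n)\to Aut(F_3)$ (only the stabilizer of the free factor retracts), and you then say the crux is to promote the partial retraction to a coarse Lipschitz retraction defined on all of $Aut(F_n)$ and to prove undistortedness of $Aut(F_3)$. That crux is precisely the content of Handel--Mosher's quasi-retraction theorem; asserting that it ``should'' follow from normal forms and the geometry of the deformation space is not a proof, and no distance estimates are given. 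As written, the argument is circular in spirit: it reduces the theorem to the very statement the note sets out to establish by other means.

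The paper's route avoids this difficulty entirely, and the difference is worth internalizing: no retraction of \emph{groups}, coarse or otherwise, is ever constructed. Instead one works with the spines $L_n$ of Auter space and $K_n$ of Outer space and three \emph{simplicial} maps: the augmentation $\iota\colon L_3\to L_n$ (wedge on $n-3$ labelled circles at the basepoint), the forgetful map $\phi_n\colon L_n\to K_n$, and the restriction $\rho\colon K_n\to K_3$ sending an $F_n$-tree to the minimal $F_3$-subtree (equivalently, the compact core of the $F_3$-cover of the marked graph). These fit into a commuting square with $\phi_3\colon L_3\to K_3$. Since a simplicial map can only decrease filling area and does not increase the length of loops, the explicit loops $\ell_i$ in $L_3$ coming from the words $w_i=T^iAT^{-i}BT^iA^{-1}T^{-i}B^{-1}$ of \cite{BV} have linear length, and their fillings in $L_n$ (resp.\ $K_n$) push forward to fillings of $\phi_3\circ\ell_i$ in $K_3$, whose area is exponential by Theorem A of \cite{BV}. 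This gives the exponential lower bound for both $Aut(F_n)$ and $Out(F_n)$ with no Lipschitz or distortion estimates at all; the only nontrivial checks are that $\rho$ is simplicial (a forest collapse lifts to one preserving the compact core) and that the square commutes (the $F_3$-cover of the augmented graph is the original graph with trees attached, which $\rho$ deletes). If you want to salvage your approach, you must either prove the quasi-retraction statement you are implicitly assuming, or replace it with a map defined on spaces rather than on the group, as above.
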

 
 This theorem answers Questions 35 and 37 of \cite{BVsurvey}.

 We learned the contents of \cite{HM} from Lee Mosher at Luminy in June 2010 and realized that
one can also reduce the Theorem 
to the case $n=3$ using
a simple observation about natural maps between different-rank  Outer spaces and Auter spaces (Lemma \ref{maps}). 
The purpose of this
note is record this observation and the resulting proof of the Theorem.

\subsection*{1. Definitions}

Let $A$ be a 1-connected simplicial complex. We consider simplicial
loops $\ell\colon S\to A^{(1)}$, where $S$ is a simplicial subdivision of the
circle. A {\em simplicial filling} of $\ell$ is a simplicial map $L\colon D\to A^{(2)}$,
where $D$ is a triangulation of the 2-disc and $L|_{\partial D}=\ell$.
Such fillings always exist, by simplicial approximation.
The filling area of $\ell$, denoted
$\Area_A(\ell)$, is the least number of triangles in the domain of any simplicial filling of $\ell$. 
The {\em{Dehn function}}\footnote{The standard definition of
area and Dehn function are
phrased in terms of singular discs, but this version is $\simeq$ equivalent.}
 of $A$ is the least function 
$\delta_A\colon \N\to\N$ such that $\Area_A(\ell)\le \delta_A(n)$ for all loops
of length $\le n$ in $A^{(1)}$. The Dehn function of a finitely presented group $G$ is the Dehn function
of any 1-connected 2-complex on which $G$ acts simplicially with finite stabilizers and compact quotient. This is well-defined up
to the following equivalence relation:  functions $f,g\colon \N\to\N$ are equivalent if $f\preceq g$ and $g\preceq f$,
where $f\preceq g$ means that  there is a 
constant $a>1$ such that $f(n) \le a\, g(an+a) + an +a$. The Dehn function can be interpreted as a measure of the
complexity of the word problem for $G$ --- see \cite{mrb-bfs}.

\begin{lemma}\label{loops} If $A$ and $B$ are 1-connected simplicial complexes, $F\colon A \to B$ is a simplicial map,  and $\ell$ is a loop in the 1-skeleton of $A$, then $\Area_A(\ell)\geq \Area_B(F\circ\ell)$.
\end{lemma}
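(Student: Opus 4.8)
The plan is to show that any simplicial filling of $\ell$ in $A$ pushes forward under $F$ to a simplicial filling of $F\circ\ell$ in $B$, with no increase in the number of triangles. Concretely, suppose $L\colon D\to A^{(2)}$ is a simplicial filling of $\ell$ realizing $\Area_A(\ell)$, so $D$ is a triangulated $2$-disc with $L|_{\partial D}=\ell$ and $L$ has exactly $\Area_A(\ell)$ triangles in its domain. Since $F$ is simplicial, the composite $F\circ L\colon D\to B^{(2)}$ is again a simplicial map, and its restriction to $\partial D$ is $F\circ\ell$. Thus $F\circ L$ is a simplicial filling of the loop $F\circ\ell$ in $B$, using the very same triangulated disc $D$ as its domain.

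The key point is then immediate from the definition of filling area: $\Area_B(F\circ\ell)$ is the least number of triangles among all simplicial fillings of $F\circ\ell$, and $F\circ L$ is one such filling whose domain $D$ has exactly $\Area_A(\ell)$ triangles. Hence $\Area_B(F\circ\ell)\le \Area_A(\ell)$, which is the claimed inequality. I would note in passing that $F\circ\ell$ is genuinely a simplicial loop in $B^{(1)}$ — it lands in the $1$-skeleton because $F$ is simplicial and $\ell$ maps into $A^{(1)}$ — so the quantity $\Area_B(F\circ\ell)$ is well-defined.

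The only subtlety worth flagging is a definitional one about what ``simplicial map'' means for $L$ and its composite with $F$. Under the standard convention in which simplicial maps are allowed to collapse simplices (i.e. they are induced by simplicial maps on the level of vertices, not required to be injective on each simplex), the composition $F\circ L$ is automatically simplicial and the triangle count of its domain is unchanged, so no genuine obstacle arises. I do not expect to need any degeneracy or non-collapsing hypothesis: even if $F$ collapses a triangle of $L(D)$ to a lower-dimensional simplex, the \emph{domain} disc $D$ and its triangulation are untouched, so the count of triangles in the domain — which is what $\Area$ measures — is preserved exactly. This makes the argument a one-line consequence of functoriality of composition together with the definition of $\Area$, and I would present it as such.
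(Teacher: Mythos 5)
Your proof is correct and is essentially identical to the paper's: compose a minimal filling $L$ of $\ell$ with $F$ to obtain a simplicial filling of $F\circ\ell$ with the same domain disc, hence the same number of triangles. Your extra remarks on the convention that simplicial maps may collapse simplices are accurate but not a different argument.
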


\begin{proof} If $L\colon D\to A$ is a simplicial filling of $\ell$, then
$F\circ L$ is a simplicial filling of $F\circ \ell$, with the same
number of triangles in the domain $D$.
\end{proof}

\begin{corollary}\label{c:factor} Let $A, B$ and $C$ be 1-connected
simplicial complexes with simplicial maps $A\to B\to C$. 
Let $\ell_n$ be a sequence of simplicial loops in $A$ whose length is bounded above by a linear function of $n$, let $\overline \ell_n$ be the image loops in $C$ and let $\alpha(n) = \Area_C(\overline \ell_n)$. Then the Dehn function of $B$ satisfies
$\delta_B(n)\succeq\alpha(n)$.
\end{corollary}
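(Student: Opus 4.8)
The plan is to read the composite $A\to B\to C$ one arrow at a time, using Lemma~\ref{loops} on the second map to get a lower bound on filling areas in $B$, and then the definition of the Dehn function to convert this into the claimed asymptotic inequality. Write $f\colon A\to B$ and $g\colon B\to C$ for the two simplicial maps and set $\ell'_n=f\circ\ell_n$, so that $\overline\ell_n=g\circ\ell'_n$ are the image loops in $C$.

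First I would apply Lemma~\ref{loops} to $g$ and to the intermediate loops $\ell'_n$ in $B$, obtaining
\[
\Area_B(\ell'_n)\ \ge\ \Area_C(g\circ\ell'_n)\ =\ \Area_C(\overline\ell_n)\ =\ \alpha(n).
\]
Thus the loops $\ell'_n$ in $B$ cannot be filled with fewer than $\alpha(n)$ triangles; producing loops in $B$ with a controlled lower bound on area is precisely the point of factoring through $B$.

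Next I would control the lengths of the $\ell'_n$. A simplicial map sends each edge either to an edge or to a vertex, so $f$ does not increase combinatorial length; hence $\mathrm{length}(\ell'_n)\le\mathrm{length}(\ell_n)$, which is bounded above by a linear function $Cn+D$ of $n$ by hypothesis. Since $\delta_B$ is non-decreasing (it is, by definition, the supremum of the areas of loops of length at most its argument), combining this with the previous step gives $\alpha(n)\le\Area_B(\ell'_n)\le\delta_B(\mathrm{length}(\ell'_n))\le\delta_B(Cn+D)$.

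Finally I would unwind the relation $\preceq$: choosing a constant $a>1$ with $a\ge C$ and $a\ge D$ yields $Cn+D\le an+a$, so monotonicity gives $\alpha(n)\le\delta_B(an+a)\le a\,\delta_B(an+a)+an+a$, which is exactly the statement $\delta_B(n)\succeq\alpha(n)$. The argument is essentially a bookkeeping exercise through the definitions, so I do not expect a serious obstacle; the only points that need a moment's care are recording that simplicial maps do not lengthen loops and that $\delta_B$ is monotone, both of which are needed to pass cleanly from the concrete inequality $\alpha(n)\le\delta_B(Cn+D)$ to the normalized form of $\preceq$.
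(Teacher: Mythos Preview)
Your proof is correct and follows exactly the paper's approach: the paper's one-sentence proof simply says the result follows from Lemma~\ref{loops} together with the observation that a simplicial map does not increase the length of any loop in the 1-skeleton, and you have carefully unpacked precisely these two ingredients. The only cosmetic blemish is that you reuse the letter $C$ for both the target complex and your linear constant; pick a different symbol for the constant.
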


\begin{proof} This follows from Lemma~\ref{loops} together with the observation that a simplicial map 
does not increase the length of any loop in the 1-skeleton.   
\end{proof}

\subsection*{2. Simplicial complexes associated to $Out(F_n)$ and $Aut(F_n)$.}  
Let $K_n$ denote the spine of  Outer space, as defined in \cite{CV}, and $L_n$ the spine of  Auter space, as defined in \cite{HV}.
These are contractible simplicial complexes with cocompact proper actions by $Out(F_n)$ and $Aut(F_n)$  respectively, so we may use them to compute the Dehn functions for these groups.  

Recall from \cite{CV} that a {\em marked graph} is a finite metric graph $\G$ together with a homotopy equivalence $g\colon R_n \to \G$, where $R_n$ is a fixed graph with one vertex and $n$ loops.  A vertex of 
$K_n$ can be represented either as a marked graph $(g,\G)$ with all vertices of valence at least three, or as a free minimal action of $F_n$ on a simplicial tree (namely the universal cover of $\G$). A vertex of $L_n$ has the same descriptions except that there is a chosen basepoint in the marked graph (respected by the marking) or in the simplicial tree.   Note that we allow  marked graphs to have separating edges.   Both $K_n$ and $L_n$ are flag complexes, so to define them it suffices to describe what it means for vertices to be adjacent.   In the marked-graph description, vertices of  $K_n$ (or $L_n$) are adjacent if  one can be obtained  from the other by a forest collapse (i.e. collapsing each component of a forest to a point).  

 \subsection*{3. Three Natural Maps}

There is a  {\em{forgetful map}} $\phi_n\colon L_n\to K_n$ which simply forgets the basepoint; this map is simplicial. 

Let $m<n$. We fix an ordered basis for $F_n$, identify $F_m$ with the subgroup generated
by the first $m$ elements of the basis, and identify $Aut(F_m)$ with the
subgroup of $Aut(F_n)$ that fixes the last $n-m$ basis elements. We consider
two maps associated to this  choice of basis.

First, there is
an equivariant {\em{augmentation map}}  $\iota\colon L_m\to L_n$ which attaches a bouquet of $n-m$ circles to the basepoint of each marked graph and marks them with the last $n-m$ basis elements of $F_n$.  This map is  simplicial, since a forest collapse has no effect on the bouquet of circles at the basepoint.  
 
Secondly, there is a {\em{restriction map}} $\rho\colon K_n\to K_m$ which is easiest to describe using trees.  A point in $K_n$ is given by a minimal free simplicial action of $F_n$ on a tree $T$ with no vertices of valence 2.  We define $\rho(T)$ to be the minimal invariant
subtree for $F_m<F_n$; more explicitly, $\rho(T)$ is the union of the axes in $T$ of all elements of $F_m$. (Vertices of $T$ that have valence 2 in $\rho(T)$
are no longer considered to be vertices.)

One can also describe $\rho$ in terms of marked graphs. The chosen embedding $F_m<F_n$ corresponds to choosing an $m$-petal subrose $R_m\subset R_n$.  
A vertex in $K_n$ is given by a graph $\G$ marked with a homotopy equivalence
 $g\colon  R_n\to \G$, and the restriction of $g$ to $R_m$ lifts to a homotopy
equivalence $\widehat g\colon  R_m\to \widehat \G$, where $\widehat \G$ is
the covering space corresponding to $g_*(F_m)$.   There is a canonical retraction $r$ of $\widehat \G$ onto its  {\em compact core}, i.e.~the smallest connected subgraph containing all nontrivial embedded loops in $\G$.
 Let $\widehat  \G_0$ be the graph obtained by erasing all vertices
of valence 2 from the compact core and define  $\rho(g,\G)=(r\circ \widehat g, \widehat \G_0)$.

\begin{lemma} For $m<n$, the restriction map $\rho\colon K_n\to K_m$ is simplicial.
\end{lemma}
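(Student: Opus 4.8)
The plan is to exploit the fact, recorded above, that $K_n$ and $K_m$ are flag complexes. A vertex map between flag complexes extends to a simplicial map precisely when it carries each pair of adjacent vertices to a pair that is adjacent or equal: for then any set of pairwise-adjacent vertices — that is, any simplex — is sent to a set of pairwise-adjacent-or-equal vertices, which spans a simplex. So it suffices to check two things, that $\rho$ sends vertices to vertices and that it sends an adjacent pair to an adjacent-or-equal pair. I would work throughout with the tree description, writing $T_m\subseteq T$ for the minimal $F_m$-invariant subtree and $\widehat T_m$ for the simplicial tree obtained from $T_m$ by forgetting its valence-2 vertices, so that $\rho(T)=\widehat T_m$; the minimal subtree seems cleaner to manipulate than the compact core.

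For well-definedness on vertices, I would note that $F_m$ acts on $T_m$ freely (since $F_n$ acts freely on $T$), minimally (by construction), simplicially, and cocompactly (a finitely generated free group acting freely and minimally on a tree has finite quotient), with quotient of rank $m$. Since $T_m$ is a union of axes it has no valence-1 vertices, so after erasing valence-2 vertices the tree $\widehat T_m$ has all vertices of valence $\ge 3$. Hence $\widehat T_m$ is a genuine vertex of $K_m$, the marking being supplied by the $F_m$-action and the fixed basis.

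The heart of the matter is adjacency. Two adjacent vertices of $K_n$ are joined by an equivariant forest collapse $c\colon T\to T'$ with collapsed invariant forest $\Phi$ (whose components are finite trees). The key sub-claim is that $c$ carries the axis in $T$ of each $h\in F_m$ onto the axis of $h$ in $T'$: collapsing the bounded segments $\mathrm{axis}_T(h)\cap\Phi$ from the line $\mathrm{axis}_T(h)$ leaves a bi-infinite line on which $h$ translates, and this must be $\mathrm{axis}_{T'}(h)$. Taking the union over $h\in F_m$ gives $c(T_m)=T'_m$. Moreover $\Phi\cap T_m$ is a genuine forest, since each of its components is the intersection of $T_m$ with a single component of $\Phi$, hence an intersection of two subtrees of $T$ and therefore connected. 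Consequently $c|_{T_m}$ is exactly the collapse of $\Phi\cap T_m$, and $T'_m$ is obtained from $T_m$ by a forest collapse.

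Finally I would descend this collapse to the reduced trees. The edges of $\widehat T_m$ (maximal segments between branch points) that are collapsed in their entirety form a forest $\widehat\Phi$ in $\widehat T_m$, and collapsing $\widehat\Phi$ yields $\widehat T'_m$; the partial collapses of the remaining edges merely shorten them and so are invisible after reduction. If $\widehat\Phi$ is empty then $\rho(T)=\rho(T')$, and otherwise $\rho(T)$ and $\rho(T')$ are adjacent in $K_m$, as required. I expect the main obstacle to be precisely this last bookkeeping step: one must verify that forgetting valence-2 vertices commutes with the forest collapse up to possibly trivialising it, i.e.\ that collapsing $\Phi\cap T_m$ and then reducing agrees with reducing and then collapsing $\widehat\Phi$. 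The remaining ingredients — the subtree-intersection observation and the axis-to-axis computation — are clean.
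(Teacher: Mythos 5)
Your argument is correct and is essentially the paper's proof translated upstairs to trees: the paper's one-line proof says that a forest collapse of $\Gamma$ is covered by a forest collapse of the cover $\widehat\Gamma$ preserving the compact core, which is exactly your statement that the collapse $T\to T'$ restricts to a forest collapse $T_m\to T'_m$ of minimal invariant subtrees (the compact core being $T_m/F_m$), combined with the same implicit use of the flag property. You supply details the paper omits --- axes map onto axes, $\Phi\cap T_m$ is a forest because intersections of subtrees are connected, and the valence-2 bookkeeping --- and the last step, which you flag as the main obstacle, does go through: a collapsed component of $\Phi\cap T_m$ containing a branch point of $T_m$ has image of valence at least $3$ (indeed valence $2+\sum(\mathrm{val}(v)-2)$ over its vertices), so branch points persist and reduction commutes with the collapse exactly as you describe.
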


\begin{proof} Any forest collapse in $\G$ is covered by
a forest collapse in $\widehat \G$ that preserves the compact core, so $\rho$ preserves adjacency.
\end{proof}

 \begin{lemma}\label{maps} For $m<n$, the following diagram of simplicial maps commutes:
$$ 
\begin{matrix}
L_m&\buildrel{\iota}\over\to &L_{n}\\
\phi_m\downarrow&&\downarrow\phi_n\\
K_m&\buildrel{\rho}\over\leftarrow & K_{n}
\end{matrix}
$$
\end{lemma}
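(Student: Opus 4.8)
The plan is to chase a single vertex around the square, working throughout with the tree description of vertices, since the restriction map $\rho$ is defined most transparently in those terms. Commutativity of the diagram is exactly the identity $\rho\circ\phi_n\circ\iota=\phi_m$, so it suffices to fix a vertex $x\in L_m$, represented by a based minimal free $F_m$-tree $T$ (the universal cover of a marked graph $(g,\Gamma)$ with basepoint), and to verify that $\rho\circ\phi_n\circ\iota(x)$ and $\phi_m(x)$ return the same unbased $F_m$-tree together with the same marking.

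First I would unwind the right-hand route. Applying $\iota$ attaches a bouquet of $n-m$ circles, marked by the last $n-m$ basis elements, to the basepoint of $\Gamma$, producing a based marked graph $(g',\Gamma')$ for $F_n$ whose universal cover is an $F_n$-tree $T'$; the map $\phi_n$ then merely discards the basepoint. The key observation is that the inclusion $\Gamma\hookrightarrow\Gamma'$ is $\pi_1$-injective and induces precisely the chosen inclusion $F_m\hookrightarrow F_n$, so it lifts to an $F_m$-equivariant embedding $T\hookrightarrow T'$ onto a subtree; moreover $g'|_{R_m}=g$, so the marking is carried along by the same lift.

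Next I would identify this embedded copy of $T$ with $\rho(T')$. Since every vertex of $\Gamma$ (including the basepoint, under the convention that all vertices have valence at least three) has valence at least three, $T$ is already the minimal $F_m$-tree for the marking $g$; hence its image in $T'$ is an $F_m$-invariant subtree on which $F_m$ acts minimally. By uniqueness of the minimal invariant subtree, this image is exactly $\rho(T')=\bigcup_{\gamma\in F_m}\mathrm{Axis}(\gamma)$, and the marking it inherits is $g$. Therefore $\rho\circ\phi_n\circ\iota(x)=(g,\Gamma)=\phi_m(x)$. Equivalently, in the graph picture one checks that the cover of $\Gamma'$ corresponding to $F_m$ contains the lift of $\Gamma$ as its compact core, because that lift already carries all of $F_m=\pi_1$, and the canonical retraction erases the infinite branches coming from the unwrapped circles.

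I expect the only point genuinely requiring care to be the identification of the lifted copy of $T$ with the minimal invariant subtree $\rho(T')$: one must rule out that some axis of an element of $F_m$ strays beyond the image of $T$ into the branches created by the attached circles. This is exactly where the hypotheses do their work, since the attached loops represent the last $n-m$ basis elements, which lie outside $F_m$, so the branches they produce are crossed by no axis of an element of $F_m$ and are collapsed by the retraction onto the compact core. The standing convention that basepoints, like all other vertices, have valence at least three is what removes any worry about valence-two vertices being created or destroyed as one passes between the Auter spaces $L$ and the Outer spaces $K$.
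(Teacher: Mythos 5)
Your argument is correct and in substance is the same as the paper's: both verify $\rho\circ\phi_n\circ\iota=\phi_m$ on a vertex by identifying the $F_m$-cover of the augmented graph as a copy of $(g,\Gamma)$ with trees attached, which $\rho$ (passage to the compact core, equivalently to the minimal invariant subtree) removes. The paper works concretely in the graph/cover picture, describing the attached trees via the Cayley graph of $F_n$, while you lead with the equivalent tree formulation and appeal to uniqueness of the minimal invariant subtree; the content is identical.
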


\begin{proof}
Given a marked graph with basepoint $(g,\G;v)\in L_n$, the marked graph
$\iota(g,\G;v)$ is obtained by attaching $n-m$ loops at $v$ labelled
by the elements $a_{m+1},\dots,a_n$
of our fixed basis for $F_n$. Then $(g_n,\G_n):=\phi_n\circ\iota(g,\G;v)$ is obtained by forgetting the basepoint, and the cover of $(g_n,\G_n)$ corresponding to
$F_m<F_n$ is obtained from a copy of $(g,\G)$ (with its labels) by attaching
$2(n-m)$ trees. (These trees are obtained from the Cayley graph of $F_n$
as follows: one cuts  at an edge labelled $a_i^\e$, with $i\in\{m+1,\dots,n\}$ and $\e=\pm 1$, takes one component of the result, and then attaches the hanging
edge
to the basepoint $v$ of $\G$.) The effect of $\rho$ is to 
delete these trees. 
\end{proof}

\subsection*{4. Proof of the Theorem}  In the light of the Corollary
 and Lemma \ref{maps},
it suffices to exhibit a sequence of loops $\ell_i$ in the 1-skeleton of $L_3$
whose lengths are bounded by a linear function of $i$ and whose filling area
when projected to $K_3$ grows exponentially as a function of $i$. Such a 
sequence of loops is essentially described in \cite{BV}.   What we actually
described there were words in the generators of $Aut(F_3)$ rather than
loops in $L_3$, but standard quasi-isometric arguments show that this is equivalent. More explicitly, the words we considered were  $w_i=T^iAT^{-i}BT^iA^{-1}T^{-i}B^{-1}$
where 
 \[
T\colon\begin{cases} 
            a_1\mapsto a_1^2a_2\cr
            a_2\mapsto a_1a_2\cr
            a_3\mapsto a_3
            \end{cases}
 A\colon\begin{cases} 
            a_1\mapsto a_1\cr
            a_2\mapsto a_2\cr
            a_3\mapsto a_1a_3
            \end{cases}
B\colon\begin{cases} 
            a_1\mapsto a_1\cr
            a_2\mapsto a_2\cr
            a_3\mapsto a_3a_2
            \end{cases} 
 \]         
To interpret these as loops in the 1-skeleton of $L_3$ (and $K_3$) we note that $A=\lambda_{31}$ and $B=\rho_{32}$ are elementary transvections and $T$ is the composition of two elementary transvections:  $T=\lambda_{21}\circ \rho_{12}$.  Thus $w_i$ is the product of $8i+4$ elementary transvections.  There is a
 (connected) subcomplex of the 1-skeleton of  $L_3$ spanned by roses 
(graphs with a single vertex) and Nielsen graphs (which have $(n-2)$ loops at the base vertex
and a further trivalent vertex).  We say roses are adjacent if they have distance $2$ in this graph.  

Let $I\in L_3$ be the rose  marked by the identity map $R_3\to R_3$.  Each elementary transvection $\tau$ moves $I$ to an adjacent rose $\tau I$, which is connected to $I$ by  a Nielsen graph $N_\tau$.   A composition $\tau_1\ldots\tau_k$ of elementary transvections gives a  path through adjacent roses $I, \tau_1I, \tau_1\tau_2I, \ldots,\tau_1\tau_2\ldots \tau_kI$; the Nielsen graph connecting $\sigma I$ to $\sigma\tau I$ is $\sigma N_\tau$.   Thus the word $w_i$ corresponds to a loop $\ell_i$  of length $16i+8$ in the 1-skeleton of $L_3$.  
Theorem A of \cite{BV} provides an exponential lower bound on the filling area of $\phi\circ \ell_i$ 
in $K_3$. \qed
 
\smallskip
The square of maps in Lemma \ref{maps} ought to have many uses beyond the one in this note (cf.~\cite{HM}). We mention just one, for illustrative purposes. This is a special case of the fact that every infinite cyclic subgroup of $Out(F_n)$ is quasi-isometrically embedded \cite{alibegovic}.

\begin{proposition}
The cyclic subgroup of $Out(F_n)$ generated by any Nielsen transformation (elementary transvection) is quasi-isometrically embedded.
\end{proposition}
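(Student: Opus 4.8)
The plan is to show that words $\tau^k$ (for a fixed elementary transvection $\tau$) have length bounded below by a linear function of $k$ in the word metric on $Out(F_n)$. Since the reverse inequality $d(1,\tau^k)\le k\,|\tau|$ is automatic from the triangle inequality, the content is the lower bound, and for this I would exploit the square of maps in Lemma~\ref{maps} together with the restriction map $\rho\colon K_n\to K_m$ to push the problem down to a rank where the dynamics of $\tau$ are visible.

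First I would recall that an elementary transvection $\tau=\rho_{ij}$ or $\lambda_{ij}$ is supported on a rank-$2$ subgroup: it acts nontrivially only on the basis elements $a_i,a_j$ and fixes the rest. Choosing the ordered basis so that these are the first two elements, the iterate $\tau^k$ restricted to $F_2=\langle a_1,a_2\rangle$ is a rank-$2$ transvection raised to the $k$-th power, say $a_1\mapsto a_1,\ a_2\mapsto a_2a_1^k$ (up to relabelling). The key geometric input is that $\rho\colon K_n\to K_m$ is $Aut$-equivariant with respect to the inclusion $Aut(F_m)<Aut(F_n)$, and it does not increase distances in the respective complexes (it is simplicial, hence $1$-Lipschitz on $1$-skeleta by the observation in the proof of Corollary~\ref{c:factor}). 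Therefore, taking $m=2$, it suffices to prove that $\langle\tau\rangle$ is quasi-isometrically embedded in $Out(F_2)$ (equivalently $Aut(F_2)$), and then transport the lower bound upward via $\rho$ applied to a fixed basepoint orbit.

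The crux of the argument is the rank-$2$ lower bound, and here I would use the translation length of $\tau$ acting on the tree (or on $K_2$). Pick a vertex $T_0\in K_n$ and consider the orbit points $\tau^k T_0$. Applying $\rho$ sends these to $\rho(\tau^k T_0)=\tau^k\rho(T_0)$ in $K_2$, and since $Out(F_2)$ is virtually a free group (indeed $Out(F_2)\cong GL(2,\Z)$), its word metric and the displacement metric on $K_2$ are quasi-isometric; the element $\tau$ is an infinite-order element of $GL(2,\Z)$ of infinite order that is not elliptic, so $d_{K_2}(\rho(T_0),\tau^k\rho(T_0))$ grows linearly in $k$. Combining with the fact that $\rho$ is distance non-increasing gives
\[
k\,c\ \le\ d_{K_2}(\rho(T_0),\tau^k\rho(T_0))\ \le\ d_{K_n}(T_0,\tau^k T_0)
\]
for some $c>0$, and the right-hand quantity is comparable to the word length $|\tau^k|$ in $Out(F_n)$ by the Milnor--\v Svarc lemma. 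This yields the desired quasi-isometric embedding.

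The main obstacle is verifying the rank-$2$ linear lower bound cleanly, i.e.\ that an elementary transvection has positive stable translation length on $K_2$ (equivalently is a hyperbolic element of $GL(2,\Z)$ on its symmetric space or tree). This is where one must be careful: a single transvection $\begin{pmatrix}1&1\\0&1\end{pmatrix}$ is \emph{parabolic} in $GL(2,\Z)$, so its translation length on the hyperbolic plane is $0$, and one cannot simply invoke hyperbolicity of a matrix. The correct resolution is to note that $Out(F_2)\cong GL(2,\Z)$ is virtually free, hence word-hyperbolic, and in a word-hyperbolic group every infinite-order element generates a quasi-isometrically embedded cyclic subgroup; this is the clean statement to cite. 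Thus the real work is organizational: establish that $\rho$ transports the problem to rank $2$ without distortion, and then appeal to the standard fact that infinite cyclic subgroups of virtually free (or more generally word-hyperbolic) groups are undistorted, rather than trying to compute translation lengths by hand.
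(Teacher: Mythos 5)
Your proposal is correct and follows essentially the same route as the paper: reduce to rank $2$ via the distance-non-increasing restriction map $\rho\colon K_n\to K_2$ from Lemma~\ref{maps}, then obtain the linear lower bound from the rank-$2$ geometry. Your appeal to word-hyperbolicity of the virtually free group $Out(F_2)$ is just a repackaging of the paper's concrete observation that $K_2$ is a tree on which the transvection acts as a hyperbolic isometry, and your caveat about parabolicity in the hyperbolic plane is exactly why the tree (rather than the symmetric space) is the right model here.
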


\begin{proof}  Each Nielsen transformation is in the image of
the map $\Phi\colon  Aut(F_2)\to Aut(F_n)\to Out(F_n)$ given by the inclusion of a 
free factor $F_2<F_n$. Thus it suffices to prove that if a
cyclic subgroup $C=\<c\><Aut(F_2)$ has infinite image in $Out(F_2)$, then
$t\mapsto \Phi(c^t)$ is a quasi-geodesic. This is 
equivalent to the assertion that some (hence any) $C$-orbit  in $K_n$ is quasi-isometrically embedded,
where $C$ acts on $K_n$ as $\Phi(C)$ and $K_n$ is given the piecewise Euclidean metric where all edges
have length $1$.

$K_2$ is a tree and $C$ acts on $K_2$ as a hyperbolic isometry, so the $C$-orbits in
$K_2$ are quasi-isometrically embedded. For each  $x\in L_2$,  the $C$-orbit of $\phi_2(x)$
is the image of the quasi-geodesic
$t\mapsto c^t.\phi_2(x) =\phi_2(c^t.x)$.  We factor $\phi_2$  
as a composition of $C$-equivariant simplicial maps $L_2\overset{\iota}\to K_n\overset{\phi_n}\to
 K_2$, as in Lemma \ref{maps},
to deduce that  the $C$-orbit
of $\phi_n\iota(x)$ in $K_n$ is
quasi-isometrically embedded.
\end{proof}

A slight variation on the above argument shows that if one lifts a free group
of finite index $\Lambda<Out(F_2)$ to $Aut(F_2)$ and then maps it to 
$Out(F_n)$ by choosing a free factor $F_2<F_n$, then the inclusion   
$\Lambda\hookrightarrow Out(F_n)$ will be a quasi-isometric embedding.


\end{document}